\newtheorem{thm}{Theorem}[section]
\newtheorem{lem}[thm]{Lemma}
\newtheorem{prop}[thm]{Proposition}
\newtheorem{defi}[thm]{Definition}
\newcommand{\ve}{\varepsilon}
\newcommand{\GZ}{\mathbb{Z}}
\newcommand{\GX}{\mathbb{X}}
\newcommand{\la}{\langle}
\newcommand{\ra}{\rangle}
\newfont{\bsl}{cmbxsl10 scaled\magstep3}
\newfont{\bit}{cmti10 scaled\magstep4}
\newfont{\bbit}{cmti10 scaled\magstep5}
\newfont{\bbf}{cmbx10 scaled\magstep4}
\newfont{\bbfb}{cmbx10 scaled\magstep5}
\newfont{\bbbf}{cmbx10 scaled\magstep3}
\begin{document}
\title{{\sf The negative slope algorithm and the dimension group of free rank 3} 
\thanks{}}
\author{{\sc Koshiro} ISHIMURA
\\
}

\date{\empty}

\maketitle
\begin{abstract}

E.~G.~Effros and C-L.~Shen constructed the dimension group of free rank 2 
from the simple continued fraction algorithm.
The notion of negative slope algorithm was introduced by S.~Ferenczi,
C.~Holton, and L.~Zamboni in their study of 3-interval exchange transformations. 
The negative slope algorithm is the 2-dimensional continued fraction algorithm.
Then the author succeed to construct the dimensional group of free rank 3 by the similar method
which E.~G.~Effros and C-L.~Shen used.
\end{abstract}

\section{Introduction}
\(\)\\[-5mm]
\indent E.~G.~Effros and C-L.~Shen \cite{ES} showed that the continued fraction expansion of \(\alpha\) which is irrational explicitly
  determines approximately finite algebras \(A\) with the dimension group \(G(A) \cong_{\mbox{ord}} (\GZ^2, P_{\alpha})\) and how one
   can use Bratteli diagrams to determine dicylic dimenstion groups. \\
   \indent
 The negative slope algorithm(NSA) was introduced by S.~Ferenczi, C.~Holton and L.~Zamboni \cite{FHZ1}, \cite{FHZ2}, \cite{FHZ3}
 to discuss the structure of 3-interval exchange transformations. NSA is a kind of 2-dimensional continued
 fraction algorithms. They discussed some arithmetic properties, the natural codings of 3-interval exchange 
 transformations and show the necessary sufficient condition which 3-interval exchange transformations are weak mixing.
 in those papers. In \cite{IN}, the author and H.~Nakada showed that NSA is weak Bernoulli by using Yuri's condition by \cite{Y}.
 They also introduced the natural extension of NSA to calculate the entropy of NSA and drive the absolutely continuous invariant 
 measure for NSA as the marginal distribution.
 The author and S.~Ito showed the necessary sufficient condition for the orbit of NSA being purely periodic by
 using the natural extension of NSA.\\
 \indent In this paper, we give the definition of the dimension group and NSA in \S 2 and \S 3 respectively.
 In \S 4, we construct the dimension group of free rank 3 for given \( (\alpha, \beta) \in [0,1)^2 \backslash \{ (x,y) \ | \ x+y=1 \} \)
 whose iteration does not stop by NSA.


\section{Definitions of the dimension group}
\indent At first, we prepare some definitions to define the dimension group.
\noindent \begin{defi}{\bf Positive cone \(G_{+}\) of \(G\) }\\
\indent Let \(G\) be an abelian group. For a subset \(S\) of \(G\), 
we denote \( -S = \{ -g \ | \ g \in S \} \).
Then, a subset \(G_{+}\) of \(G\) is the positive cone, 
if \(G_{+}\) satisfies the following.
\begin{eqnarray*}
\begin{array}{cl}
(1) & G_{+} \mbox{ is semi-group, that is, } G_{+} + G_{+} \subset G_{+} \\
(2) & G_{+} \cap (-G_{+}) = \{ 0 \} \\
(3) & G_{+} - G_{+} = G \\
(4) & n : \mbox{positive integer, } ng \in G_{+} \Rightarrow g \in G_{+} 
\end{array}
\end{eqnarray*}
\end{defi}

\indent
If \(G\) has the positive cone \(G_{+}\) in the above sense, \(G\) is torsion-free, 
that is, the following is hold;
\begin{eqnarray*}
&& \mbox{For} \ n : \mbox{non-zero positive integer}, \\
&& \quad ng=0 \Rightarrow g=0.
\end{eqnarray*}

\noindent \begin{defi}{{\bf Partial order}}\\
\indent For \(g, h \in G\), we define
\[
g \ge h \Longleftrightarrow g-h \in G_{+}.
\]
Then, it is easy to see that \( \ge \) is the partial order on \(G\).
\end{defi}
 
\noindent 
{\bf Example}\\
\indent 
Let \(G = \GZ^n\) and \( G_{+} = (\GZ_{+})^n\), then \( G_{+}\) is the positive cone of \(G\) 
where \( \GZ_{+} \) is the set of all non-negative integers. 

 \indent We call \( (G, G_{+})\) as the simplicial group of free rank \(n\). \\
 \indent
 For example, let \(A\) is a \( m \times n \) integer matrix, then 
\begin{eqnarray*}
A \left( (\GZ_{+})^n \right) \subset (\GZ_{+})^m  \Longleftrightarrow \ \mbox{ the each component of \(A\) is non-negative }.
\end{eqnarray*}

\begin{defi}{\bf The positive induction sequence}\\
\indent 
We call \( \{ A_{k+1} \ : \ \GZ^{n_k} \rightarrow \GZ^{n_{k+1}} \}_{k \ge 0} \) 
as a positive induction sequence 
where \( A_{k+1}\) is a \( n_{k+1} \times n_k \) non-negative integer matrix.
\end{defi}
\indent 
For simplicity, we call this positive induction sequence as the induction.\\

\begin{defi}{\bf The inductive limit group of the induction}\\
\indent
Let \( \{ A_{k+1} : \GZ^{n_k} \rightarrow \GZ^{n_{k+1}} \}_{k \ge 0} \) 
is the induction.  We put a direct sum group as follows:
\begin{eqnarray*}
&& \bigoplus_{k \ge 0} \GZ^{n_k} := \left\{ (g_0, g_1, g_2, \cdots) \ | \ g_k \in \GZ^{n_k} \mbox{ and } \sharp \{ k \ge 0 \ | \ g_k \neq 0 \} < \infty \right\} \\
&& \mbox{where, for} \ \Vec{g}=(g_0, g_1, g_2, \cdots), \ \Vec{h}=(h_0, h_1, h_2, \cdots) \in \bigoplus_{k \ge 0} \GZ^{n_k}, \\
&& \mbox{we put} \ \Vec{g} + \Vec{h} = (g_0, g_1, g_2, \cdots)+(h_0, h_1, h_2, \cdots)=(g_0+h_0, g_1+h_1, g_2+h_2, \cdots) .
\end{eqnarray*}
Then, for each \( k \ge 0 \), we obtain the following natural embedding;
\[
j_k : \GZ^{n_k} \hookrightarrow \bigoplus_{k \ge 0} \GZ^{n_k} : g \mapsto (0,0, \cdots, 0, \ \underset{k~th}g \ , 0,0, \cdots ) .
\]
\end{defi}

Next, we consider the following subgroup of the direct sum group;
\begin{eqnarray*}
A & = & \mbox{ the subgroup which generated by } \\
& & \qquad \{ j_l \circ A_l A_{l-1} \cdots A_{k+1} (g) - j_k (g) \ | \ 0 \leq k < l, g \in \GZ^{n_k} \} \\
& := &  \left\{ \sum_{\mbox{\tiny  finite sum  }} j_{l_i} \circ A_{l_i} A_{l_i-1} \cdots A_{k_i +1} (g_i) - j_{k_i} (g_i) \right. \\
&& \qquad \qquad  \qquad \qquad \qquad \qquad \qquad \left. | \ 0 \leq k_i < l_i, g_i \in \GZ^{n_{k_i}} \right\}  \\[5mm]
& \subset &  \bigoplus_{k \ge 0} \GZ^{n_k}  .
\end{eqnarray*}
\indent
We denote the quotient group of the direct sum by \(A\) as 
\[
\bigoplus_{k \ge 0} \GZ^{n_k} / A =: \varinjlim ( \GZ^{n_k} , A_{k+1})
\]
and call it the inductive limit group of the induction \( \{ A_{k+1} : \GZ^{n_k} \rightarrow \GZ^{n_{k+1}} \}_{k \ge 0} \).
We denote the composition of the embedding \( j_k \) and the projection 
\( q : \bigoplus_{k \ge 0} \GZ^{n_k} \rightarrow 
\varinjlim ( \GZ^{n_k} , A_{k+1}) \) as 
\[
\theta_k = q \circ j_k \ : \ \GZ^{n_k} \rightarrow \varinjlim ( \GZ^{n_k} , A_{k+1}),
\]
Then we call it the canonical homomorphism.\\
\indent
Then we have the following proposition.\\
\begin{prop}
\[ \theta_{k} = \theta_{k+1} \circ A_{k+1} \]
\end{prop}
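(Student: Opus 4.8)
The plan is to unwind the definitions of $\theta_k$ and $\theta_{k+1}$ directly and show that they agree as maps on $\GZ^{n_k}$. Recall that $\theta_k = q \circ j_k$ and $\theta_{k+1} = q \circ j_{k+1}$, where $q$ is the quotient projection by the subgroup $A$. So for an arbitrary $g \in \GZ^{n_k}$, I must verify that
\[
q \circ j_{k+1} \circ A_{k+1}(g) = q \circ j_k(g),
\]
which is equivalent to showing that the difference $j_{k+1}(A_{k+1}(g)) - j_k(g)$ lies in the subgroup $A$ that we quotient out by.

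First I would write out $j_{k+1}(A_{k+1}(g)) - j_k(g)$ explicitly. The key observation is that this element is exactly one of the \emph{generators} of $A$: taking the generator family $\{ j_l \circ A_l A_{l-1} \cdots A_{k+1}(g) - j_k(g) \mid 0 \le k < l, \ g \in \GZ^{n_k} \}$ and specializing to $l = k+1$, the product $A_l A_{l-1} \cdots A_{k+1}$ collapses to the single matrix $A_{k+1}$, so the generator becomes precisely $j_{k+1}(A_{k+1}(g)) - j_k(g)$. Hence this difference belongs to $A$ by construction.

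Once that membership is established, the conclusion is immediate: since $q$ is the projection onto the quotient $\bigoplus_{k \ge 0} \GZ^{n_k} / A$, any element of $A$ maps to $0$ under $q$, so
\[
q\bigl( j_{k+1}(A_{k+1}(g)) - j_k(g) \bigr) = 0,
\]
and because $q$ is a homomorphism this rearranges to $q(j_{k+1}(A_{k+1}(g))) = q(j_k(g))$, i.e. $\theta_{k+1}(A_{k+1}(g)) = \theta_k(g)$. As $g \in \GZ^{n_k}$ was arbitrary, this yields the desired equality of maps $\theta_k = \theta_{k+1} \circ A_{k+1}$.

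I do not anticipate a serious obstacle here; the statement is essentially the defining universal property of the inductive limit, and the proof is a matter of correctly matching the element $j_{k+1}(A_{k+1}(g)) - j_k(g)$ against the generating set of $A$. The one point to state carefully is the degenerate-product convention (the $l = k+1$ case of the generators), so that it is transparent that this specific difference is a generator and not merely a general element of $A$; making that indexing explicit is the only place where a reader might otherwise pause.
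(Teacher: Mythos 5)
Your proof is correct and follows essentially the same route as the paper: both arguments reduce the claim to the membership $j_{k+1}(A_{k+1}(g)) - j_k(g) \in A$ (the $l = k+1$ generator) and then apply the quotient projection $q$. Your write-up is simply a more explicit version of the paper's two-line computation.
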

\begin{proof}
\indent
Let \( g \in \GZ^{n_k} \). Then, since \( j_{k+1} A_{k+1}(g) - j_k (g) \in A\), we have
\begin{eqnarray*}
 \theta_{k+1} \circ A_{k+1}(g) - \theta_k (g) & = & q \circ j_{k+1} A_{k+1} (g) - q \circ j_k (g) \\
 & = & q \left( j_{k+1} A_{k+1} (g) - j_k (g) \right) = 0.
\end{eqnarray*}

\end{proof}
Now we define the dimension group determined by the induction.

\begin{defi}{\bf The dimension group determined by the induction}\\
\indent
Let \( \{ A_{k+1} : \GZ^{n_k} \rightarrow \GZ^{n_{k+1}} \}_{k \ge 0} \) be the induction. 
Then the inductive limit group \( G := \varinjlim ( \GZ^{n_k} , A_{k+1}) \)  of the induction \( \{ A_{k+1} \}_{k \ge 0} \) has 
the following natural positive cone;
\[ G_{+} := \bigcup_{k \ge 0} \theta_k ( (\GZ_{+})^{n_k} ) .
\]
Then, we call \( (G, G_{+}) \) as the dimension group determined by the induction \( \{ A_{k+1} \}_{k \ge 0} \).
\end{defi}


\section{Definitions and some properties of the negative slope algorithm}
\subsection{Definitions of the negative slope algorithm}
\quad First we introduce a map $T$ which is called the negative slope algorithm on the unit square in \( [0,1]\).  Let \(\GX = [0, \, 1]^2 \setminus \{(x, y)\, | \, 
x+y =1 \}$, we define a map \(T\) on \(\GX\) by
\[
T(x, y) \, = \, \left\{ 
\begin{array}{ccc}
\left( \frac{y}{(x+y) - 1} - \left[\frac{y}{(x+y) - 1} \right], \, 
\frac{x}{(x+y) - 1} - \left[\frac{x}{(x+y) - 1} \right] \right) & 
 \mbox{if} & x + y \, > \, 1 \\
 {}&{}& {} \\
\left( \frac{1-y}{1 - (x+y) } - \left[\frac{1-y}{1-(x+y)} \right], \, 
\frac{1-x}{1-(x+y)} - \left[\frac{1-x}{1-(x+y)} \right] \right) & 
 \mbox{if} & x + y \, < \, 1  .
\end{array} \right.
\]

Using the integer valued functions 
\[
(n(x, y), m(x, y)) \, = \, \left\{ 
\begin{array}{ccc}
\left( \left[\frac{y}{(x+y) - 1} \right]  ,  \left[\frac{x}{(x+y) - 1} \right] \right)
 & 
 \mbox{if} & x + y \, > \, 1 \\
 {} & {} & {} \\
\left( \left[\frac{1-y}{1-(x+y)} \right],  \left[\frac{1-x}{1-(x+y)} \right] \right) & 
 \mbox{if} & x + y \, < \, 1 ,
\end{array} \right.
\]
and 
\[
\varepsilon(x, y) \, = \, \left\{ 
\begin{array}{ccl}
-1 & \mbox{if} & x + y \, > \, 1 \\
 +1  &  \mbox{if} & x + y \, < \, 1 ,
\end{array} \right.
\]
for each $(x, y) \in \GX$, we have a sequence 
\[
\left( (\ve_1(x, y), n_1(x, y), m_1(x, y)), \, 
(\ve_2(x, y), n_2(x, y), m_2(x, y)), \, 
\ldots, \right).
\]
We obtain it by
\[
\left\{
\begin{array}{ccc}
\varepsilon_k (x, y) & = & \varepsilon( T^{k-1}(x, y)) \\
n_k (x, y) & = & n( T^{k-1}(x, y)) \\
m_k (x, y) & = & m( T^{k-1}(x, y)) 
\end{array} \right.
\]
for $k \ge 1$. 
\begin{lem}{\rm (\cite{II}, Lemma 2.5)} For $n_i, m_i \ge 1$, $ i\ge 1 $ 
and for any sequence $\left( (\ve_i, n_i, m_i), 
i \ge 1\right)$, there exists $(x, y) \in \GX$ such that 
$(\ve_i(x, y), n_i(x, y), m_i(x, y)) \, = \, (\ve_i, n_i, m_i) $ 
unless there exists $k \ge 1$ such that $ ( \ve_i, m_i) = (+1, 1) $ for $ i \ge k $ 
or $ ( \ve_i, n_i) = (+1, 1) $ for $ i \ge k $. 
\end{lem}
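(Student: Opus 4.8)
The plan is to realize a prescribed sequence as the itinerary of a single point obtained by intersecting a nested family of cylinder sets, each built from the inverse branches of $T$, and then to recognise the two exceptional tails as exactly those for which this intersection is forced onto the excluded line $x+y=1$.

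First I would compute the inverse branches of $T$ in projective (homogeneous) coordinates. Writing a point $(x,y)\in\GX$ as a column vector $(x,y,1)^{t}$ up to a positive scalar, and inverting the two defining formulas, one finds that the branch $\phi_{\ve,n,m}$ with datum $(\ve,n,m)$ is the projective action of a $3\times 3$ nonnegative integer matrix, namely
\[
M^{-}_{n,m}=\begin{pmatrix} 0 & 1 & m \\ 1 & 0 & n \\ 1 & 1 & n+m-1\end{pmatrix},
\qquad
M^{+}_{n,m}=\begin{pmatrix} 1 & 0 & n-1 \\ 0 & 1 & m-1 \\ 1 & 1 & n+m-1\end{pmatrix}.
\]
For $n,m\ge 1$ every entry is nonnegative, so each branch maps the positive cone into itself; this is the very positivity that underlies the induction of \S2. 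I would then verify that $T\circ\phi_{\ve,n,m}$ is the identity on the relevant half of $\GX$ and that $\phi_{\ve,n,m}$ carries $\overline{\GX}$ into the cylinder on which $(\ve_1,n_1,m_1)=(\ve,n,m)$. The sign bookkeeping is automatic: writing $s=\bigl((x'+n)+(y'+m)-1\bigr)^{-1}>0$, one gets $x+y=1+s$ in the branch $M^{-}_{n,m}$ and $x+y=1-s$ in the branch $M^{+}_{n,m}$, so the images land in $\{x+y>1\}$ and $\{x+y<1\}$ respectively.

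Next, for a given admissible word I would set $\Delta_N=\phi_{\ve_1,n_1,m_1}\circ\cdots\circ\phi_{\ve_N,n_N,m_N}(\overline{\GX})$, the projective image of $\overline{\GX}$ under the product $M_1\cdots M_N$. These sets are nested, $\Delta_{N+1}\subset\Delta_N$, and nonempty because every finite word with $n_i,m_i\ge 1$ is realised at level $N$. By compactness $\bigcap_{N}\Delta_N\neq\varnothing$, and any point of the intersection has the prescribed first $N$ symbols for every $N$, hence the prescribed itinerary, \emph{provided} the point lies in $\GX$ and not on the forbidden line.

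Finally, and this is the crux, I would show that $\bigcap_{N}\Delta_N$ meets $\GX$ precisely when the word avoids the two exceptional tails. Here I would invoke the Birkhoff--Hilbert contraction for nonnegative matrices: once a partial product becomes strictly positive it contracts the Hilbert projective metric on $\overline{\GX}$, so the diameters of the $\Delta_N$ shrink and the intersection is a single interior point. The analysis of when strict positivity is never reached is exactly what pins down the exceptions. The matrix $M^{+}_{n,1}$ has second row $(0,1,0)$ and $M^{+}_{1,m}$ has first row $(1,0,0)$, and these rows are reproduced under multiplication; thus a tail consisting only of symbols $(\ve_i,m_i)=(+1,1)$, respectively only of $(\ve_i,n_i)=(+1,1)$, preserves a permanent zero pattern, the surviving coordinate tends to $0$, and the cylinders collapse onto the corners $(1,0)$ and $(0,1)$, i.e. onto the line $x+y=1\notin\GX$. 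Conversely, any symbol outside these two families destroys both row patterns, so infinitely many such symbols yield a strictly positive block and the desired contraction. The main obstacle is precisely this dichotomy: proving that the degenerate row structure is preserved \emph{only} by the two listed families, and that its failure forces genuine positivity, together with the estimate showing that in the non-degenerate case the limit point stays strictly off the diagonal and therefore lies in $\GX$.
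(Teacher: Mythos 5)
You should first be aware that the paper contains no proof of this statement: it is quoted verbatim from \cite{II} (Lemma 2.5), so I can only assess your argument on its own terms. Your setup is correct: the inverse branches are exactly the matrices $A^{-1}_{(\pm1,n,m)}$ of \S 3.1, they are nonnegative, the sign bookkeeping $x+y=1\mp s$ is right, and nested compact cylinders give a nonempty intersection. The gap is in the step you yourself call the crux. The forbidden line $x+y=1$ is never where the trouble lies: any point of the closed first cylinder satisfies $|x+y-1|=\bigl(x_1+y_1+n_1+m_1-1\bigr)^{-1}\ge (n_1+m_1+1)^{-1}>0$, so every limit point, exceptional tail or not, is automatically bounded away from that line. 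What the preserved row $(0,1,0)$ of $M^{+}_{n,1}$ actually forces is collapse onto the segment $\{y=0\}$, not onto the corner $(1,0)$ or onto $x+y=1$. Concretely, the cylinders of the constant word $(+1,2,1)$ shrink to the fixed point $\bigl(\tfrac{\sqrt5-1}{2},0\bigr)$ of $M^{+}_{2,1}$ and those of $(+1,1,1)$ shrink to $(0,0)$; both points lie in $\GX$ and in fact realize those itineraries, since $\{y=0\}$ is $T$-invariant with digits $\bigl(+1,\bigl[\tfrac{1}{1-x}\bigr],1\bigr)$. So your proposed mechanism both fails to establish non-realizability in the exceptional case (which, to be fair, the lemma does not assert) and, more seriously, tests a vacuous condition in the non-exceptional case: checking that the limit point avoids $x+y=1$ does not guarantee that its itinerary is the prescribed one.

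What actually needs to be proved for a non-exceptional word is that the limit point avoids, at every step, the sets where the floor functions $n(\cdot,\cdot)$ and $m(\cdot,\cdot)$ jump---equivalently, that the closed cylinder of length $N+j$ lies inside the open cylinder of length $N$ for some $j$; the exceptional tails are precisely those for which one coordinate is never pushed off the invariant boundary pieces $\{y=0\}$, $\{x=0\}$, on which the coding degenerates (these are exactly the points where the algorithm ``stops'' in the sense of Definition 3.3). Your zero-pattern observation is the right combinatorial ingredient for this, but it is incomplete: a symbol outside the two families destroys the rows $(0,1,0)$ and $(1,0,0)$ yet does not immediately make the product strictly positive (for instance $M^{+}_{n,m}$ with $m\ge2$ still has a zero in position $(2,1)$), so the full bookkeeping of which zero patterns can persist is required. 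Finally, the Birkhoff contraction coefficient of a positive block depends on its entries, and with unbounded $n_i,m_i$ it need not be bounded away from $1$, so ``infinitely many positive blocks'' does not by itself force the diameters to tend to $0$; for mere existence this is not needed, but then it should not be invoked as the engine of the proof.
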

By \cite{FHZ1} and \cite{IN} 
we see that if $(x, y) \ne (x', y') \in \GX$, then there exists $k \ge 1$ such that 
\begin{equation*}
(\ve_k(x, y), n_k(x, y), m_k(x, y)) \, \ne \, 
(\ve_k(x', y'), n_k(x', y'), m_k(x', y')) . 
\end{equation*}


Next we introduce a projective representation of $T$ as follows.  We put 
\[
A_{(+1,n,m)} \, = \, 
\begin{pmatrix}
n   &  n-1  &  1-n  \\
m-1 &   m   &  1-m  \\
-1   &   -1   &  1 
\end{pmatrix}
\]
and 
\[
A_{(-1,n,m)} \, = \, 
\begin{pmatrix}
-n   &  -n+1  &  n  \\
-m+1 &   -m   &  m  \\
1    &    1   &  -1 
\end{pmatrix}
\]
for $m, n \ge 1$. Then we have 
\[
A_{(+1,n,m)}^{-1} \, = \, 
\begin{pmatrix}
  1   &   0  &   n-1  \\
  0   &   1  &   m-1  \\
  1   &   1  &  n+m-1 
\end{pmatrix} 
\]
and 
\[
A_{(-1,n,m)}^{-1} \, = \, 
\begin{pmatrix}
  0   &   1  &   m  \\
  1   &   0  &   n  \\
  1   &   1  &  n+m-1 
\end{pmatrix} .
\]
We identify $(x,y) \in \GX$ to $\begin{pmatrix} \alpha x \\ \alpha y \\
\alpha \end{pmatrix}$ for $\alpha \ne 0$.  Then we identify $T(x,y)$ to 
\[
A_{(\ve_1(x, y), n_1(x, y), m_1(x, y))} \begin{pmatrix} 
x \\ y \\ 1 \end{pmatrix} 
\]
and its local inverse is given by 
\[
A_{(\ve_1(x, y), n_1(x, y), m_1(x, y))}^{-1} .
\]
In this way, we get a representation of $(x, y) \in \GX$ by 
\[
A_{(\ve_1, n_1, m_1)}^{-1} \, A_{(\ve_2, n_2, m_2)}^{-1} \, 
A_{(\ve_3, n_3, m_3)}^{-1} \, \cdots 
\]
and $T$ is defined as a multiplication by $A_{(\ve_1, n_1, m_1)}$ 
from the left and acts as 
a shift on the set of infinite sequences of matrices 
\[
\left\{ 
A_{(\ve_1, n_1, m_1)}^{-1} \, A_{(\ve_2, n_2, m_2)}^{-1} \, 
A_{(\ve_3, n_3, m_3)}^{-1} \, \cdots \, | \, 
\ve_k = \pm 1, \, n_k, m_k \ge 1 \, \mbox{for} \, k \ge1
\right\} .
\]
For a given finite sequence $ \left( (\ve_1, n_1, m_1), \, (\ve_2, n_2, m_2), \, 
\ldots , \, (\ve_k, n_k, m_k) \right) $, we define a cylinder set of length 
$k$ by 
\begin{eqnarray*}
\lefteqn{
\la (\ve_1, n_1, m_1), \, (\ve_2, n_2, m_2), \, 
\ldots , \, (\ve_k, n_k, m_k) \ra } \\ 
&& = \, 
\{ (x, y) \in \GX \, | \, (\ve_i(x, y), n_i(x, y), m_i(x, y)) = (\ve_i, n_i, m_i) 
 , \, 1 \le i \le k \} .
\end{eqnarray*}
For simplicity, we write \( \Delta_k \) for this cylinder set.\\

\indent
For $(x, y) \in \Delta_k$, we denote $T^k(x, y)$ as 
\[
A_{(\ve_k, n_k, m_k)} \cdots A_{(\ve_1, n_1, m_1)} 
\begin{pmatrix} x \\ y \\ 1 \end{pmatrix}
\]
and its local inverse $\Psi_{\Delta_k}$ as 
\[
A_{(\ve_1, n_1, m_1)}^{-1} \cdots A_{(\ve_k, n_k, m_k)}^{-1} .
\]
We put 
\[
\Psi_{\Delta_k} = A_{(\ve_1, n_1, m_1)}^{-1} \cdots A_{(\ve_k, n_k, m_k)}^{-1} 
\, = \, 
\begin{pmatrix}
p_1^{(k)}  &  p_2^{(k)}  &  p_3^{(k)} \\
r_1^{(k)}  &  r_2^{(k)}  &  r_3^{(k)} \\
q_1^{(k)}  &  q_2^{(k)}  &  q_3^{(k)} 
\end{pmatrix}
\]
for any sequence $\left( (\ve_1, n_1, m_1), \, (\ve_2, n_2, m_2), \, 
\ldots , \, (\ve_k, n_k, m_k) \right) $, $k \ge 1$. \\
\indent Since 
\begin{eqnarray*}
{} &{} & \left\{ \left( \frac{y}{(x+y) - 1}, \frac{x}{(x+y) - 1} \right) \, : \, (x, y) 
\in \GX, \,    x+y > 1 \right\} \\
& = & 
\left\{ \left( \frac{1-y}{1 -(x+y)}, \frac{1-x}{1-(x+y)} \right) \, : \, (x, y)  \in \GX, \,  x+y < 1 \right\} \\[0.3cm]
& = & \{ (x', y') \, : \, x' \ge 1, \, y' \ge 1 \},
\end{eqnarray*}
we see that 
\begin{equation*}
T^{j} \{ (x, y) \in \GX \, : \, \ve_k(x, y) = \ve_k, \,  n_k(x,y) = n_k, \, 
m_k(x,y) = m_k, \, 1 \le k \le j \} \, = \, \GX 
\end{equation*}
for any $\{ (\ve_k, n_k, m_k), \, 1 \le k \le j \}$, 
$\ve_k = \pm 1 $, $n_k, m_k \ge 1$ without the boundary of \(\GX\).\\

\subsection{The case where the negative slope algorithm stops}
Next we define what means that the iteration by the negative slope algorithm 
\(T\) of \((x,y)\ \in \GX\) stops.
\begin{defi}We denote k-th iteration by the negative slope algorithm 
\(T\) of \((x,y) \in \GX\) as \((x_k,y_k) = T^k (x,y)\). 
Then we say iteration by the negative slope algorithm \(T\) of \((x,y) \in \GX\) stops 
if there exists \(k_0 \geq 0\) such that \(x_{k_0} =0 \) or \(y_{k_0} =0\) or \( x_{k_0} + y_{k_0} =1 \).
\end{defi}
This implies that iteration by the negative slope algorithm 
\(T\) of \((x,y) \in \GX\) stops if there exists \(k_0 \ge 0 \) s.t. 
\((x_{k_0}, y_{k_0}) \in \partial \GX\).
From this definition, we get the following propositions.
\begin{prop}{\rm \cite{II}} \ \ 
If iteration by the negative slope algorithm \(T\) of \((x,y) \in \GX\) stops, then \((x,y)\) satisfies one of the following equations.
\begin{eqnarray*}
&(p+1)x+py=q& \\
&px+(p+1)y =q& \\
&px+py=q& 
\end{eqnarray*}
for some integers \( 0 \leq q \leq 2p\).
\end{prop}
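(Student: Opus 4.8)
The plan is to read off each way the algorithm can stop through the projective matrix representation, and then to control the resulting linear equation by a unimodularity invariant of the matrices $\Psi_{\Delta_k}$. First I would record that, in projective coordinates, $T^{k_0}(x,y)$ is represented by $B\,{}^{t}(x,y,1)$, where $B = A_{(\ve_{k_0},n_{k_0},m_{k_0})}\cdots A_{(\ve_1,n_1,m_1)} = \Psi_{\Delta_{k_0}}^{-1}$ is an integer matrix with $\det B = 1$ (one checks $\det A_{(\pm 1,n,m)} = 1$ directly, e.g.\ by the column operation $C_1 \mapsto C_1 + C_3$). Writing $B=(b_{ij})$, so that $\mu x_{k_0}=b_{11}x+b_{12}y+b_{13}$, $\mu y_{k_0}=b_{21}x+b_{22}y+b_{23}$ and $\mu=b_{31}x+b_{32}y+b_{33}$ with common denominator $\mu$, the three stopping possibilities become linear equations in $(x,y)$: $x_{k_0}=0$ gives row $1$ of $B$, $y_{k_0}=0$ gives row $2$, and $x_{k_0}+y_{k_0}=1$ gives the combination $\mathrm{row}_1+\mathrm{row}_2-\mathrm{row}_3$. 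Since $\det\Psi_{\Delta_{k_0}}=1$, these rows are exactly the cross products $v_2\times v_3$, $v_3\times v_1$, $v_1\times v_2$ of the columns $v_j$ of $\Psi_{\Delta_{k_0}}$ (with entries $p_j,r_j,q_j$ from top to bottom); so each equation is $ax+by+c=0$ with integer coefficients built from the $v_j$.

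The heart of the argument is a structural lemma about $\Psi_{\Delta_k}$, proved by induction on $k$ via $\Psi_{\Delta_{k+1}}=\Psi_{\Delta_k}A_{(\ve_{k+1},n_{k+1},m_{k+1})}^{-1}$ and the explicit non-negative inverses listed in the text. Introducing $\sigma(p,r,q)=(p+r,\,q)$, I would show that for every $k$: (i) all entries of $\Psi_{\Delta_k}$ are non-negative, being a product of non-negative matrices; (ii) $\sigma(v_1)=\sigma(v_2)$, i.e.\ $p_1+r_1=p_2+r_2$ and $q_1=q_2$; and (iii) $\lvert\det(\sigma(v_1),\sigma(v_3))\rvert=1$. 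The base case $k=0$ is $\Psi=I$, where $\sigma(v_1)=\sigma(v_2)=(1,0)$ and the determinant is $1$. For the inductive step, the inverse matrices send $(v_1,v_2,v_3)$ to $(v_1+v_3,\,v_2+v_3,\,\ast)$ when $\ve=+1$ and to $(v_2+v_3,\,v_1+v_3,\,\ast)$ when $\ve=-1$, which propagates (ii) at once; a short bilinear expansion of $\det\bigl(\sigma(v_1^{\mathrm{new}}),\sigma(v_3^{\mathrm{new}})\bigr)$, using $\sigma(v_1)=\sigma(v_2)$, then shows the determinant in (iii) is preserved for $\ve=+1$ and merely changes sign for $\ve=-1$, so its absolute value stays $1$.

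To finish I would combine these facts. For any cross product $u\times v$ the difference of its first two coordinates equals $\det(\sigma(u),\sigma(v))$; hence, using (ii)--(iii), the $x$- and $y$-coefficients of the equation from $x_{k_0}=0$ (namely $v_2\times v_3$) or from $y_{k_0}=0$ (namely $v_3\times v_1$) differ by $\pm1$, which yields the forms $(p+1)x+py=q$ and $px+(p+1)y=q$; whereas for $x_{k_0}+y_{k_0}=1$ the two determinants cancel against each other and against $\det(\sigma(v_1),\sigma(v_2))=0$, so the coefficients are equal, giving $px+py=q$. After normalising the global sign so that the two leading coefficients are non-negative (possible because $\lvert a-b\rvert\le 1$), membership $(x,y)\in[0,1]^2$ together with (i) forces $0\le q=ax+by\le a+b$; since $q$ is an integer and $ax+by=a+b$ only at $(x,y)=(1,1)$, one reads off $0\le q\le 2p$ in each case.

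The step I expect to be the main obstacle is precisely this last one: turning the clean relations $a-b\in\{-1,0,1\}$ into the sharp bound $0\le q\le 2p$ with non-negative leading coefficients requires careful sign bookkeeping through the normalisation and a separate treatment of the degenerate lines $\{x=1\}$ and $\{y=1\}$ (the $p=0$ adjacent case, arising from $\ve_1=-1$, $n_1=1$ or $m_1=1$), which fall outside the literal statement and must be isolated by hand. By contrast, verifying the invariants (i)--(iii) is mechanical; the real content is that the determinant in (iii) equals exactly $\pm1$ rather than some larger constant, since it is this unimodularity that makes the three prescribed forms — and no others — appear.
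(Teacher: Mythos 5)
The paper contains no proof of this proposition: it is quoted from \cite{II}, so there is no in-paper argument to measure yours against, and the proposal must be judged on its own. Its core is sound, and in fact the two invariants you propose to establish by induction are already available in the paper: your (ii) is exactly Lemma 3.4 (\(p_1^{(k)}+r_1^{(k)}=p_2^{(k)}+r_2^{(k)}\), \(q_1^{(k)}=q_2^{(k)}\)), and your (iii) is exactly identity (1) in the proof of Lemma 3.5, which reads \((p_2^{(k)}+r_2^{(k)})q_3^{(k)}-(p_3^{(k)}+r_3^{(k)})q_2^{(k)}=\ve_1\cdots\ve_k=\pm 1\). Combined with \(\det\Psi_{\Delta_{k}}=1\) and the cross-product description of the rows of \(\Psi_{\Delta_{k}}^{-1}\), these give your relations \(a-b=\pm\delta_{k_0}\) for the loci \(x_{k_0}=0\) and \(y_{k_0}=0\), and \(a-b=0\) for \(x_{k_0}+y_{k_0}=1\), with no further induction needed. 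The normalisation step is also fine: two consecutive integers are either both \(\ge 0\) or both \(\le 0\), so a global sign always makes the leading coefficients non-negative.

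The loose end you flag at the close is, however, a genuine one and not mere bookkeeping. For the first two forms your bound is only \(0\le q\le a+b=2p+1\), and \(q=2p+1\) is actually attained: at \((1,1)\) the algorithm stops with \(x_1=0\), and row \(1\) of \(A_{(-1,1,1)}\) produces the line \(x=1\), i.e. \(p=0\), \(q=1>2p\); that point must be re-fitted by hand to the third form \(x+y=2\). Worse, every \((1,y)\in\GX\) with \(y\) irrational also stops, since \(T(1,y)=(0,\,1/y-[1/y])\), and such a point satisfies none of the three listed equations with \(p\ge 1\) — only the degenerate \(0\cdot x+0\cdot y=0\). So the lines \(x=1\) and \(y=1\) are a real exceptional family on which the statement holds only in its degenerate reading (or under an implicit restriction to the interior of \(\GX\), as presumably intended in \cite{II}). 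Your argument is complete once you either impose that restriction explicitly or carry out the separate treatment of these two lines that you promise; everything else checks out.
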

\begin{prop}{\rm \cite{II}} \ \ 
If \((x,y) \in \GX\) satisfies the following equation,
\[ px+py =q\]
for any integers \( 0 \le q \le 2p\), then there exists \(N>0\) such that the sequence \((T^k(x,y) : k \ge 0)\) terminates at \(k=N\) for the negative slope algorithm \(T\). 
\end{prop}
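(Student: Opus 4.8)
The plan is to push the defining linear relation through the orbit by means of the projective representation of $T$, and then to extract a strictly decreasing sequence of positive integers, whose impossibility forces termination.

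First I would encode $px+py=q$ as the vanishing of the row vector $v_0=(p,p,-q)$ against the homogeneous coordinate: $v_0\,(x,y,1)^{T}=px+py-q=0$. Writing $(x_k,y_k)=T^k(x,y)$ and recalling that $T^k(x,y)$ is represented by $A_{(\ve_k,n_k,m_k)}\cdots A_{(\ve_1,n_1,m_1)}(x,y,1)^{T}$, I set
\[
v_k:=v_0\,\Psi_{\Delta_k}=v_0\,A_{(\ve_1,n_1,m_1)}^{-1}\cdots A_{(\ve_k,n_k,m_k)}^{-1}.
\]
A one-line cancellation, using that $(x_k,y_k,1)^{T}$ is proportional to $A_{(\ve_k,n_k,m_k)}\cdots A_{(\ve_1,n_1,m_1)}(x,y,1)^{T}$, gives $v_k\,(x_k,y_k,1)^{T}=0$. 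Writing $v_k=(a_k,b_k,-c_k)$, every point of the orbit therefore lies on the line $a_kx_k+b_ky_k=c_k$. Since each inverse $A_{(\ve,n,m)}^{-1}$ displayed in \S3 has integer entries and determinant $1$, the triple $(a_k,b_k,c_k)$ stays integral for all $k$, starting from $(a_0,b_0,c_0)=(p,p,q)$.

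The decisive step is to read the recursion for $(a_k,b_k,c_k)$ off the two explicit inverse matrices via $v_{k+1}=v_k A_{(\ve_{k+1},n_{k+1},m_{k+1})}^{-1}$. The computation shows $a_{k+1}-b_{k+1}=a_k-b_k$ when $\ve_{k+1}=+1$ and $a_{k+1}-b_{k+1}=-(a_k-b_k)$ when $\ve_{k+1}=-1$; since $a_0-b_0=0$, this forces $a_k=b_k$ for all $k$. Hence the relation collapses to $a_k(x_k+y_k)=c_k$, i.e.\ $c_k=a_k s_k$ with $s_k:=x_k+y_k$. The same multiplication yields, in both branches, $a_{k+1}=a_k-c_k$, so
\[
a_{k+1}=a_k(1-s_k).
\]

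Now I would run the descent. For $k\ge1$ the coordinates $x_k,y_k$ are fractional parts, hence lie in $[0,1)$; if the algorithm has not stopped then $x_k\neq0$, $y_k\neq0$ and $x_k+y_k\neq1$, so $s_k\in(0,2)\setminus\{1\}$ and therefore $0<|1-s_k|<1$, giving $0<|a_{k+1}|<|a_k|$. Moreover $a_k=0$ is impossible, since it would force $v_k=(0,0,-c_k)$ with $c_k=0$, i.e.\ $v_k=0$, contradicting $v_k=v_0\Psi_{\Delta_k}$ with $\Psi_{\Delta_k}$ unimodular and $v_0\neq0$. Thus, as long as the orbit avoids $\partial\GX$, the integers $|a_k|\ge1$ form a strictly decreasing sequence starting at $|a_0|=p$ (the extreme data $q=0$ and $q=2p$ producing a stop within one step, while $q=p$ cannot occur since $x+y=1$ is excluded from $\GX$). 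No such sequence is infinite, so there is a first index $N$ with $(x_N,y_N)\in\partial\GX$; this is exactly the assertion that the sequence terminates at $k=N$, and the descent yields an explicit bound for $N$ in terms of $p$.

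The only genuinely delicate points are the two short verifications underpinning the descent: that conjugating the linear form by the integral, unimodular matrices $A_{(\ve,n,m)}^{-1}$ keeps $(a_k,b_k,c_k)$ integral, and that the symmetry $a_k=b_k$ is preserved by both branches — this is precisely what specializes the argument to the form $px+py=q$ rather than the skew forms of the preceding proposition. Once the scalar recursion $a_{k+1}=a_k(1-s_k)$ is in hand the conclusion is immediate; I expect the only real care to be in justifying the strict inequality $|1-s_k|<1$, which rests on the coordinates being genuine fractional parts beyond the first step and on each boundary event $x_k=0$, $y_k=0$, $x_k+y_k=1$ having been excluded by the non-stopping hypothesis.
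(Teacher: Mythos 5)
Your argument is correct, and I can verify it directly against the matrices displayed in \S 3: the paper itself states this proposition without proof (it is quoted from \cite{II}), so there is no in-paper argument to compare against. The three pillars of your descent all check out. First, $v_{k}(x_k,y_k,1)^{T}=0$ follows because $(x_k,y_k,1)^{T}$ is a nonzero multiple of $\Psi_{\Delta_k}^{-1}(x,y,1)^{T}$ (the scalar is a product of factors $\pm(x_j+y_j-1)\neq 0$). Second, reading off the first two columns of $A_{(+1,n,m)}^{-1}$ and $A_{(-1,n,m)}^{-1}$ gives exactly $(a_{k+1},b_{k+1})=(a_k-c_k,\,b_k-c_k)$ in the $+1$ branch and $(b_k-c_k,\,a_k-c_k)$ in the $-1$ branch, so $a_k-b_k$ is preserved up to sign and $a_0=b_0=p$ forces $a_k=b_k$, whence $c_k=a_ks_k$ and $a_{k+1}=a_k(1-s_k)$. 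Third, $\det A_{(\pm1,n,m)}^{-1}=1$, so $v_k\neq 0$ and hence $a_k\neq 0$ as long as $v_0\neq 0$; combined with $|1-s_k|\in(0,1)$ off the boundary, the integers $|a_k|$ strictly decrease, which is impossible indefinitely and in fact bounds $N$ by roughly $p$. This is arguably the natural proof, and it is a pleasant complement to Proposition 3.3, since it makes visible why the symmetric form $px+py=q$ (unlike the skew forms $(p+1)x+py=q$) is the one that guarantees termination: only there does the sign-flip in the $-1$ branch leave the form invariant. The one caveat is at the level of the statement rather than your proof: you need $p\ge 1$ for $v_0\neq 0$ (for $p=q=0$ the hypothesis is vacuous and the conclusion false), and the degenerate inputs $(0,0)$ and $(1,1)$, where $|1-s_0|=1$, must be (and are) disposed of separately since they stop within one step anyway.
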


\subsection{Properties of the negative slope algorithm}
\indent
From \S 3.1, it is easy to see that \(p^{(k)}_i\) and \(r^{(k)}_i\) are non-negative integers 
and \( q_i^{(k)} \) is positive integer for \( i=1,2,3\), \(k \ge 0\). 
In this subsection, we show some properties for entries of \( \Psi_{\Delta_k} \).

\begin{lem}{\rm \cite{II}} \ 
For the entries of \(\Psi_{\Delta_k}\), we have
\begin{eqnarray*}
\left\{ \begin{array}{ccl}
p_1^{(k)} &=& p_2^{(k)} + \ve_1 \cdots \ve_k \\
r_1^{(k)} &=& r_2^{(k)} - \ve_1 \cdots \ve_k \\
q_1^{(k)} &=& q_2^{(k)} \qquad \quad \quad \quad .
\end{array}
\right. 
\end{eqnarray*}
\end{lem}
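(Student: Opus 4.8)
The plan is to recognize that the three scalar identities are really a single vector identity. Writing $v = (1,-1,0)^{t}$, the claim $p_1^{(k)} = p_2^{(k)} + \ve_1\cdots\ve_k$, $r_1^{(k)} = r_2^{(k)} - \ve_1\cdots\ve_k$, $q_1^{(k)} = q_2^{(k)}$ says exactly that the first column of $\Psi_{\Delta_k}$ minus its second column equals $\ve_1\cdots\ve_k\, v$. Since that difference of columns is precisely $\Psi_{\Delta_k}\, v$, the entire lemma is equivalent to the statement $\Psi_{\Delta_k}\, v = \ve_1\cdots\ve_k\, v$. So it suffices to show that $v$ is a common eigenvector of all the inverse generators, with eigenvalue equal to the sign $\ve$.

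First I would record the key computation. A direct multiplication using the two explicit matrices displayed in \S 3.1 gives $A_{(+1,n,m)}^{-1}\, v = v$ and $A_{(-1,n,m)}^{-1}\, v = -v$, uniformly in $n,m \ge 1$; that is, $A_{(\ve,n,m)}^{-1}\, v = \ve\, v$ for $\ve = \pm 1$. This amounts to reading off the first-minus-second column of each inverse generator, and it is the only place where the specific entries of the generators are used. The vanishing third coordinate of $v$ is what forces the $n,m$-dependence in the last column to drop out, so the relation holds with no constraint on $n$ or $m$.

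Next I would propagate this through the product $\Psi_{\Delta_k} = A_{(\ve_1,n_1,m_1)}^{-1} \cdots A_{(\ve_k,n_k,m_k)}^{-1}$ by letting $v$ act from the right and peeling off one factor at a time, or equivalently by an immediate induction on $k$. The rightmost factor sends $v$ to $\ve_k\, v$, each successive factor contributes one further sign, and the product telescopes to $\Psi_{\Delta_k}\, v = \ve_1 \ve_2 \cdots \ve_k\, v$. Reading off the three coordinates of this vector identity returns $p_1^{(k)} - p_2^{(k)} = \ve_1\cdots\ve_k$, $r_1^{(k)} - r_2^{(k)} = -\ve_1\cdots\ve_k$, and $q_1^{(k)} - q_2^{(k)} = 0$, which are the three asserted relations.

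I do not expect a genuine obstacle here: the only insight required is identifying the vector $v = (1,-1,0)^{t}$, after which the eigenvector verification for the two generators and the telescoping are both routine. If anything, the small thing to be careful about is the order of multiplication in $\Psi_{\Delta_k}$, which is why I let $v$ act on the right so that the factors are consumed from $A_{(\ve_k,n_k,m_k)}^{-1}$ inward and the signs accumulate in the correct order.
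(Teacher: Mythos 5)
Your proof is correct. A quick verification against the matrices displayed in \S 3.1: with $v = {}^t(1,-1,0)$ one indeed gets $A_{(+1,n,m)}^{-1}v = v$ and $A_{(-1,n,m)}^{-1}v = -v$ (the zero third coordinate of $v$ kills the $n,m$-dependent last columns), and since $\Psi_{\Delta_k}v$ is the first column minus the second column of $\Psi_{\Delta_k}$, the telescoped identity $\Psi_{\Delta_k}v = \ve_1\cdots\ve_k\,v$ is exactly the three asserted relations. Note that the paper itself gives no proof of this lemma --- it is quoted from the reference [II] --- so there is no in-text argument to compare against directly. The closest the paper comes is in the proof of Lemma 3.6, where the one-step recursion $\Psi_{\Delta_k} = \Psi_{\Delta_{k-1}}A_{(\ve_k,n_k,m_k)}^{-1}$ is used to read off $(p_1^{(k)},p_2^{(k)})$ and $(r_1^{(k)},r_2^{(k)})$ columnwise; subtracting those two recursions coordinate by coordinate is precisely your induction, so your eigenvector formulation is the same underlying computation packaged more cleanly, with the advantage that all three relations are proved simultaneously and the sign bookkeeping is automatic.
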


\begin{lem}
For \((x ,y) \in \GX\), we have
\begin{eqnarray*}
 \frac{p^{(k)}_1+r^{(k)}_1}{q^{(k)}_1}= \frac{p^{(k)}_2+r^{(k)}_2}{q^{(k)}_2}= \frac{p^{(k)}_3+r^{(k)}_3}{q^{(k)}_3} +  \frac{\delta_k}{ q^{(k)}_2 q^{(k)}_3 }
\end{eqnarray*}
where \(\delta_k = \ve_1(x ,y) \cdots \ve_k(x ,y)\).
\end{lem}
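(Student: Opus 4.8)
The plan is to treat the two equalities separately; the first is essentially immediate from the preceding lemma, and the second is the substantive one. For the first equality I would simply substitute the relations just proved: since $p_1^{(k)} = p_2^{(k)} + \delta_k$, $r_1^{(k)} = r_2^{(k)} - \delta_k$ and $q_1^{(k)} = q_2^{(k)}$, adding the first two relations cancels $\delta_k$ and gives $p_1^{(k)} + r_1^{(k)} = p_2^{(k)} + r_2^{(k)}$, whence $\frac{p_1^{(k)}+r_1^{(k)}}{q_1^{(k)}} = \frac{p_2^{(k)}+r_2^{(k)}}{q_2^{(k)}}$ because the denominators agree.

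For the second equality, since $q_2^{(k)}$ and $q_3^{(k)}$ are positive integers I would clear denominators and reduce the claim to the single integer identity
\[ q_3^{(k)}\bigl(p_2^{(k)}+r_2^{(k)}\bigr) - q_2^{(k)}\bigl(p_3^{(k)}+r_3^{(k)}\bigr) = \delta_k. \]
The left-hand side splits as the sum of two $2\times 2$ minors of $\Psi_{\Delta_k}$, namely $\bigl(p_2^{(k)}q_3^{(k)} - p_3^{(k)}q_2^{(k)}\bigr) + \bigl(r_2^{(k)}q_3^{(k)} - r_3^{(k)}q_2^{(k)}\bigr)$. A direct determinant computation shows each generator $A_{(\pm1,n,m)}$ has determinant $1$, so $\det \Psi_{\Delta_k} = 1$ and the inverse of $\Psi_{\Delta_k}$ equals its adjugate. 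Reading off the appropriate cofactors then identifies these two minors with entries of $\Psi_{\Delta_k}^{-1}$: the $(1,1)$ cofactor is $r_2^{(k)}q_3^{(k)}-r_3^{(k)}q_2^{(k)}$ and the $(1,2)$ entry of $\Psi_{\Delta_k}^{-1}$ is $-\bigl(p_2^{(k)}q_3^{(k)}-p_3^{(k)}q_2^{(k)}\bigr)$, so the identity to be proved becomes
\[ \bigl(\Psi_{\Delta_k}^{-1}\bigr)_{11} - \bigl(\Psi_{\Delta_k}^{-1}\bigr)_{12} = \delta_k, \]
where $\Psi_{\Delta_k}^{-1} = A_{(\ve_k, n_k, m_k)} \cdots A_{(\ve_1, n_1, m_1)}$ by definition of $\Psi_{\Delta_k}$.

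The crux is the following observation, which I would verify by a one-line matrix multiplication for each sign: the column vector $v=\begin{pmatrix}1\\-1\\0\end{pmatrix}$ is a common eigenvector of every generator, with $A_{(\ve, n, m)}\,v = \ve\,v$ for $\ve=\pm1$ and all $n,m\ge 1$. Consequently $v$ is carried through the whole product,
\[ \Psi_{\Delta_k}^{-1} v = A_{(\ve_k, n_k, m_k)} \cdots A_{(\ve_1, n_1, m_1)}\, v = \ve_1 \ve_2 \cdots \ve_k\, v = \delta_k\, v, \]
and comparing first coordinates gives exactly $\bigl(\Psi_{\Delta_k}^{-1}\bigr)_{11} - \bigl(\Psi_{\Delta_k}^{-1}\bigr)_{12} = \delta_k$, which is what remained. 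Combining this with the first equality finishes the proof.

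I expect the only real obstacle to be spotting that $v=\begin{pmatrix}1\\-1\\0\end{pmatrix}$ is simultaneously an eigenvector of both families $A_{(\pm1,n,m)}$ with eigenvalue equal to the sign; everything else is bookkeeping with cofactors together with the fact that each generator has determinant $1$. Should one prefer to avoid the eigenvector trick, the same identity $q_3^{(k)}(p_2^{(k)}+r_2^{(k)}) - q_2^{(k)}(p_3^{(k)}+r_3^{(k)}) = \delta_k$ can be established by induction on $k$ using the recursion $\Psi_{\Delta_{k+1}} = \Psi_{\Delta_k}\, A_{(\ve_{k+1}, n_{k+1}, m_{k+1})}^{-1}$, with the base case $k=1$ read off directly from the two displayed inverse matrices; but the eigenvector argument is cleaner and transparently explains the appearance of the sign $\delta_k$.
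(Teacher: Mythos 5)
Your proof is correct, but the way you establish the key identity $\bigl(p_2^{(k)}q_3^{(k)}-p_3^{(k)}q_2^{(k)}\bigr)+\bigl(r_2^{(k)}q_3^{(k)}-r_3^{(k)}q_2^{(k)}\bigr)=\delta_k$ differs genuinely from the paper. The paper expands $\det\Psi_{\Delta_k}$ along its first row, substitutes the column relations $p_1^{(k)}=p_2^{(k)}+\delta_k$, $r_1^{(k)}=r_2^{(k)}-\delta_k$, $q_1^{(k)}=q_2^{(k)}$ from the preceding lemma so that a factor of $\delta_k$ comes out of the cofactor expansion, and then equates with $\det\Psi_{\Delta_k}=1$ (using $\delta_k^2=1$). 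You instead recognize the relevant sum of $2\times2$ minors as $\bigl(\Psi_{\Delta_k}^{-1}\bigr)_{11}-\bigl(\Psi_{\Delta_k}^{-1}\bigr)_{12}$ via the adjugate, and evaluate it by observing that $v={}^t(1,-1,0)$ is a common eigenvector of all the generators with $A_{(\ve,n,m)}v=\ve v$, so that $\Psi_{\Delta_k}^{-1}v=\delta_k v$. Your route is arguably more illuminating: the same eigenvector applied to $\Psi_{\Delta_k}$ itself (since $A^{-1}v=\ve v$ as well) immediately reproduces the cited column relations, so it explains in one stroke where the sign $\delta_k$ comes from in both lemmas, whereas the paper's computation is a direct but less transparent determinant manipulation. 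Both arguments use $\det A_{(\pm1,n,m)}=1$ and the positivity of the $q_i^{(k)}$ (stated in \S 3.3) to clear denominators, and both are complete.
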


\begin{proof}
By taking a determinant of \(\Psi_{\Delta_k}\), we have
\begin{eqnarray*}
\left| \begin{array}{ccc}
p_1^{(k)}  &  p_2^{(k)}  &  p_3^{(k)} \\
r_1^{(k)}  &  r_2^{(k)}  &  r_3^{(k)} \\
q_1^{(k)}  &  q_2^{(k)}  &  q_3^{(k)} 
\end{array} \right|
= p_1^{(k)} \left| \begin{array}{cc}
  r_2^{(k)}  &  r_3^{(k)} \\
  q_2^{(k)}  &  q_3^{(k)} 
\end{array}\right| - p_2^{(k)}  \left| \begin{array}{cc}
r_1^{(k)}    &  r_3^{(k)} \\
q_1^{(k)}   &  q_3^{(k)} 
\end{array} \right| + p_3^{(k)} \left| \begin{array}{cc}
r_1^{(k)}  &  r_2^{(k)}  \\
q_1^{(k)}  &  q_2^{(k)}  
\end{array} \right|.
\end{eqnarray*}
From Lemma 3.1, the right hand side is equal to
\begin{eqnarray*}
(p_2^{(k)} + \delta_k) \left| \begin{array}{cc}
  r_2^{(k)}  &  r_3^{(k)} \\
  q_2^{(k)}  &  q_3^{(k)} 
\end{array}\right| - p_2^{(k)}  \left| \begin{array}{cc}
r_2^{(k)} - \delta_k   &  r_3^{(k)} \\
q_1^{(k)}   &  q_3^{(k)} 
\end{array} \right| + p_3^{(k)} \left| \begin{array}{cc}
r_2^{(k)} -\delta_k  &  r_2^{(k)}  \\
q_1^{(k)}  &  q_2^{(k)}  
\end{array} \right|
\end{eqnarray*}
where \(\delta_k = \ve_1(x ,y) \cdots \ve_k(x ,y)\). Since \( \det \Psi_{\Delta_k} =1 \), we have
\begin{eqnarray} ( r_2^{(k)} q_3^{(k)} - r_3^{(k)} q_2^{(k)} ) + ( p_2^{(k)} q_3^{(k)} - p_3^{(k)} q_2^{(k)} ) = \delta_k. 
\end{eqnarray}
Substituting \( p_1^{(k)} = p_2^{(k)} + \delta_k \), \( r_1^{(k)} = r_2^{(k)} - \delta_k  \) and \(q_1^{(k)}=q_2^{(k)}\) for \((1)\), we see that
\begin{eqnarray} ( r_1^{(k)} q_3^{(k)} - r_3^{(k)} q_1^{(k)} ) + ( p_1^{(k)} q_3^{(k)} - p_3^{(k)} q_1^{(k)} ) = \delta_k. 
\end{eqnarray}
From \((1)\) and \((2)\), we have
\begin{eqnarray}
\frac{p_1^{(k)}+r_1^{(k)}}{q_1^{(k)}} = \frac{p_2^{(k)}+r_2^{(k)}}{q_2^{(k)}}= \frac{p_3^{(k)}+r_3^{(k)}}{q_3^{(k)}} + \frac{\delta_k}{q_2^{(k)} q_3^{(k)}}.
\end{eqnarray}

\end{proof}

\begin{lem} For \(i=1,2,3\), we have 
\[
\lim_{k \rightarrow \infty} \left( \frac{p_i^{(k)}}{q_i^{(k)}}, \frac{r_i^{(k)}}{q_i^{(k)}} \right) = ( \alpha, \beta) .
\]
\end{lem}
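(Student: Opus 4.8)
The plan is to read the three columns of $\Psi_{\Delta_k}$ as three points of the closed square $\overline{\GX}=[0,1]^2$ and to show that, together with $(\alpha,\beta)$, they all lie in the closed cylinder $\overline{\Delta_k}$, whose diameter tends to $0$. Throughout write $(x_k,y_k)=T^k(\alpha,\beta)$ and, for a vector $v=(a,b,c)^{t}$ with $c>0$, let $\pi(v)=(a/c,b/c)$ be its image in the plane. Write $C_i^{(k)}$ for the $i$-th column of $\Psi_{\Delta_k}$, so that $\pi(C_i^{(k)})=\bigl(p_i^{(k)}/q_i^{(k)},\,r_i^{(k)}/q_i^{(k)}\bigr)$ is exactly the point we must control.

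First I would record the homogeneous identity expressing that $\Psi_{\Delta_k}$ is the local inverse of $T^{k}$ on $\Delta_k$: since $T^{k}$ sends $(\alpha,\beta)\in\Delta_k$ to $(x_k,y_k)$,
\[
\begin{pmatrix}\alpha\\ \beta\\ 1\end{pmatrix}\;\sim\;\Psi_{\Delta_k}\begin{pmatrix}x_k\\ y_k\\ 1\end{pmatrix},
\]
where $\sim$ means equality up to a positive scalar; in particular $(\alpha,\beta)=\pi\!\left(\Psi_{\Delta_k}(x_k,y_k,1)^{t}\right)\in\Psi_{\Delta_k}(\overline{\GX})$. Together with the fact that $(\alpha,\beta)\in\Delta_k$ holds trivially (it is the point whose symbols define the cylinder), this gives $(\alpha,\beta)\in\overline{\Delta_k}=\Psi_{\Delta_k}(\overline{\GX})$, the last equality being the closure of the relation $T^{k}\Delta_k=\GX$ quoted in \S3.1.

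The key observation is that each column of $\Psi_{\Delta_{k+1}}$ also lands in $\overline{\Delta_k}$. Indeed $\Psi_{\Delta_{k+1}}=\Psi_{\Delta_k}\,A_{(\ve_{k+1},n_{k+1},m_{k+1})}^{-1}$, so $C_i^{(k+1)}$ is $\Psi_{\Delta_k}$ applied to the $i$-th column of $A_{(\ve_{k+1},n_{k+1},m_{k+1})}^{-1}$. Reading off the explicit inverses, the three columns of $A_{(+1,n,m)}^{-1}$ projectivize to $(1,0)$, $(0,1)$ and $\bigl(\tfrac{n-1}{n+m-1},\tfrac{m-1}{n+m-1}\bigr)$, and those of $A_{(-1,n,m)}^{-1}$ to $(0,1)$, $(1,0)$ and $\bigl(\tfrac{m}{n+m-1},\tfrac{n}{n+m-1}\bigr)$; for $n,m\ge 1$ all six of these points lie in $\overline{\GX}=[0,1]^2$. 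Hence $\pi(C_i^{(k+1)})=\Psi_{\Delta_k}(\text{a point of }\overline{\GX})\in\Psi_{\Delta_k}(\overline{\GX})=\overline{\Delta_k}$ for $i=1,2,3$. Combining the two facts, both $(\alpha,\beta)$ and all three points $\pi(C_i^{(k+1)})$ belong to $\overline{\Delta_k}$, so
\[
\left|\,\pi(C_i^{(k+1)})-(\alpha,\beta)\,\right|\;\le\;\operatorname{diam}\overline{\Delta_k}
\]
for every $i$, and the lemma follows at once from $\operatorname{diam}\overline{\Delta_k}\to 0$.

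This last step is where the real work lies, and I expect it to be the main obstacle. I would deduce it from the injectivity of the coding recalled in \S3.1 (distinct points of $\GX$ are separated by some symbol), which yields $\bigcap_k\Delta_k=\{(\alpha,\beta)\}$; upgrading this to $\operatorname{diam}\overline{\Delta_k}\to 0$ requires controlling the closures, i.e.\ ruling out that the nested compacta $\overline{\Delta_k}$ shrink to a nondegenerate set, and here I would lean on the expansivity and metric estimates for $T$ from \cite{FHZ1}, \cite{IN}, \cite{II} (equivalently, that $q_i^{(k)}\to\infty$ so the cylinders have vanishing size under the non-stopping hypothesis). The determinant identities of Lemmas 3.1 and 3.2 serve as a consistency check rather than a substitute: they already force $\pi(C_1^{(k)})-\pi(C_2^{(k)})\to 0$ and make the anti-diagonal coordinates $\tfrac{p_i^{(k)}+r_i^{(k)}}{q_i^{(k)}}$ asymptotically equal, but they do not by themselves bound the spread of the columns along the anti-diagonal, which is precisely why the shrinking of $\overline{\Delta_k}$ is the essential ingredient.
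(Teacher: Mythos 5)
Your proof is correct and follows essentially the same route as the paper: the paper likewise identifies the first two columns as projective images of the corners $(0,1)$ and $(1,0)$ of the square under $\Psi_{\Delta_{k-1}}$, and then invokes Theorem 4.1 of \cite{IN} --- which is precisely the cylinder-shrinking statement $\overline{\Delta_k}\downarrow\{(\alpha,\beta)\}$ that you isolate as the essential ingredient --- to conclude for all three columns. The only difference is cosmetic: the paper cites that theorem directly for the third column instead of deducing it from the diameter bound, and handles the second column via the identities $p_1^{(k)}=p_2^{(k)}+\delta_k$, $r_1^{(k)}=r_2^{(k)}-\delta_k$, $q_1^{(k)}=q_2^{(k)}$ rather than via the third column of $A^{-1}$.
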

\begin{proof}
From theorem 4.1 of \cite{IN}, we see that 
\[
\lim_{k \rightarrow \infty} \left( \frac{p^{(k)}_3}{q^{(k)}_3}, \frac{r^{(k)}_3}{q^{(k)}_3} \right) = (\alpha, \beta).
\]

\indent Since 
\[
A_{(\ve_1, n_1, m_1)}^{-1} \cdots A_{(\ve_k, n_k, m_k)}^{-1} 
\, = \, \left\{ \begin{array}{r} 
\begin{pmatrix}
p_1^{(k-1)}  &  p_2^{(k-1)}  &  p_3^{(k-1)} \\
r_1^{(k-1)}  &  r_2^{(k-1)}  &  r_3^{(k-1)} \\
q_1^{(k-1)}  &  q_2^{(k-1)}  &  q_3^{(k-1)} 
\end{pmatrix}
\begin{pmatrix}
1   &   0   &   n_k  - 1    \\
0   &   1   &   m_k -1   \\
1   &   1   &   n_k + m_k -1 
\end{pmatrix}
 \mbox{     if} \ \ve_k \, = \, +1  \\[7mm]
\begin{pmatrix}
p_1^{(k-1)}  &  p_2^{(k-1)}  &  p_3^{(k-1)} \\
r_1^{(k-1)}  &  r_2^{(k-1)}  &  r_3^{(k-1)} \\
q_1^{(k-1)}  &  q_2^{(k-1)}  &  q_3^{(k-1)} 
\end{pmatrix}
\begin{pmatrix}
0   &   1   &   m_k    \\
1   &   0   &   n_k    \\
1   &   1   &   n_k + m_k -1 
\end{pmatrix} 
 \mbox{     if} \ \ve_k \, = \, -1
\end{array} \right. , 
\]
we see that
\begin{eqnarray*} 
(p_1^{(k)}, p_2^{(k)})=
\left\{
\begin{array}{lll}
(p_1^{(k-1)}+p_3^{(k-1)}, p_2^{(k-1)}+p_3^{(k-1)}) & \mbox{if} & \ve = +1 \\[4mm]
(p_2^{(k-1)}+p_3^{(k-1)}, p_1^{(k-1)}+p_3^{(k-1)}) & \mbox{if} &\ve = -1 
\end{array}
\right.
\end{eqnarray*}
and
\begin{eqnarray*}
(r_1^{(k)}, r_2^{(k)})=
\left\{
\begin{array}{lll}
(r_1^{(k-1)}+r_3^{(k-1)}, r_2^{(k-1)}+r_3^{(k-1)}) & \mbox{if} & \ve = +1 \\[4mm]
(r_2^{(k-1)}+r_3^{(k-1)}, r_1^{(k-1)}+r_3^{(k-1)}) & \mbox{if} & \ve = -1 
\end{array}
\right. .
\end{eqnarray*}
Then we have
\begin{eqnarray*}
\left( \frac{p_1^{(k)}}{q_1^{(k)}}, \frac{r_1^{(k)}}{q_1^{(k)}} \right) &=&
\left\{
\begin{array}{lll}
\left( {\displaystyle \frac{p_1^{(k-1)}+p_3^{(k-1)}}{q_1^{(k-1)}+q_3^{(k-1)}}, \frac{r_1^{(k-1)}+p_3^{(k-1)}}{q_1^{(k-1)}+q_3^{(k-1)}}} \right) & \mbox{if} & \ve = +1 \\[7mm]
\left( {\displaystyle \frac{p_2^{(k-1)}+p_3^{(k-1)}}{q_1^{(k-1)}+q_3^{(k-1)}}, \frac{r_2^{(k-1)}+p_3^{(k-1)}}{q_2^{(k-1)}+q_3^{(k-1)}}} \right) & \mbox{if} & \ve = -1 
\end{array}
\right. \\[3mm]
&=&
\left\{
\begin{array}{lll}
\Psi_{\Delta_{k-1}} (0,1) & \mbox{if} & \ve = +1 \\[4mm]
\Psi_{\Delta_{k-1}} (1,0) & \mbox{if} & \ve = -1 
\end{array}
\right. .
\end{eqnarray*}
From lemma 3.5, we have
\[ \left( \frac{p_1^{(k)}}{q_1^{(k)}}, \frac{r_1^{(k)}}{q_1^{(k)}} \right) = \left( \frac{p_2^{(k)}}{q_2^{(k)}}, \frac{r_2^{(k)}}{q_2^{(k)}} \right). \]
From theorem 4.1 of \cite{IN}, this implies that
\[
\lim_{k \rightarrow \infty} \left( \frac{p_i^{(k)}}{q_i^{(k)}}, \frac{r_i^{(k)}}{q_i^{(k)}} \right) = (\alpha, \beta) \quad (i=1,2,3)
\]
\end{proof}


\section{A construction of the dimension group of free rank 3}

\indent In this section, we proof our main theorem.
\begin{thm}
Assume that \( (\alpha ,\beta)  \in \GX \) does not stop by the negative slope algorithm. 
We put \( G\) and \( G_{+} \) as follows:
\begin{eqnarray*}
&& G = \varinjlim \left( \GZ^3, ~^{t}A^{-1}_{ (\ve_{k+1}, n_{k+1}, m_{k+1}) } \right), \\[2mm]
&& G_{+} = P_{\alpha, \beta} := \{ {\bf 0}  \} \cup \{ {\bf v} \in \GZ^3 | (\alpha, \beta, 1) {\bf v} >0  \}.
\end{eqnarray*}
For the induction \( \{ ^{t}A^{-1}_{ (\ve_{k+1}, n_{k+1}, m_{k+1}) } : \GZ^3 \rightarrow \GZ^3 \}_{k \ge 0} \), 
we put \( \{ \theta_k : \GZ^3 \rightarrow G \}_{k \ge 0} \) as follows:
\begin{eqnarray*}
&&\theta_k = \left\{
\begin{array}{ll}
identity & k=0 \\[3mm]
^{t} \Psi_{\Delta_k}^{-1} & k \ge 1.
\end{array}
\right. 
\end{eqnarray*}
\indent Then \( ( G, G_{+}) \) is the dimension group determined by the induction 
\( \{ ^{t}A^{-1}_{ (\ve_{k+1}, n_{k+1}, m_{k+1}) } \}_{k \ge 0}\), 
which has \( \{\theta_k \}_{k \ge 0} \) as the canonical homomorphism.
\end{thm}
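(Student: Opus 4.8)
The plan is to check the two conditions that make $(G,G_{+})$ the dimension group of Definition 2.6: that $\{^{t}A^{-1}_{(\ve_{k+1},n_{k+1},m_{k+1})}\}_{k\ge 0}$ is an induction whose canonical homomorphisms are the stated $\theta_k$, and that the natural positive cone $\bigcup_{k\ge 0}\theta_k((\GZ_{+})^3)$ equals $P_{\alpha,\beta}$.

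For the first condition I would record that each of the matrices $A^{-1}_{(+1,n,m)}$ and $A^{-1}_{(-1,n,m)}$ written in \S3.1 has non-negative integer entries and determinant $1$ (the value of the determinant was already used in the proof of Lemma 3.6). Transposition preserves both facts, so the induction is well defined and every connecting matrix lies in $GL_3(\GZ)$. Hence each connecting map is an automorphism of $\GZ^3$; in particular $q\circ j_0:\GZ^3\to G$ is an isomorphism, and identifying $G$ with $\GZ^3$ along it gives $\theta_0=\mathrm{id}$. Proposition 2.5 yields $\theta_{k}=\theta_{k+1}\circ{}^{t}A^{-1}_{(\ve_{k+1},n_{k+1},m_{k+1})}$, so $\theta_k=\theta_0\circ\bigl(^{t}A^{-1}_{(\ve_k)}\cdots{}^{t}A^{-1}_{(\ve_1)}\bigr)^{-1}$; since $^{t}A^{-1}_{(\ve_k)}\cdots{}^{t}A^{-1}_{(\ve_1)}={}^{t}\bigl(A^{-1}_{(\ve_1)}\cdots A^{-1}_{(\ve_k)}\bigr)={}^{t}\Psi_{\Delta_k}$, this is exactly $\theta_k={}^{t}\Psi_{\Delta_k}^{-1}$, as asserted.

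The substance of the theorem is the description of the cone. As $^{t}\Psi_{\Delta_k}\in GL_3(\GZ)$, a vector $\mathbf{v}\in\GZ^3$ lies in $\theta_k((\GZ_{+})^3)$ iff $^{t}\Psi_{\Delta_k}\,\mathbf{v}\in(\GZ_{+})^3$, and since the rows of $^{t}\Psi_{\Delta_k}$ are the columns $(p_i^{(k)},r_i^{(k)},q_i^{(k)})$ of $\Psi_{\Delta_k}$ this reads
\[
p_i^{(k)}v_1+r_i^{(k)}v_2+q_i^{(k)}v_3\ge 0\qquad(i=1,2,3).
\]
Because $q_i^{(k)}>0$ (\S3.3), dividing by $q_i^{(k)}$ and setting $L_i^{(k)}(\mathbf{v})=\frac{p_i^{(k)}}{q_i^{(k)}}v_1+\frac{r_i^{(k)}}{q_i^{(k)}}v_2+v_3$, Lemma 3.7 gives $L_i^{(k)}(\mathbf{v})\to(\alpha,\beta,1)\mathbf{v}$ for each $i$. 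Moreover the cones $\theta_k((\GZ_{+})^3)$ increase with $k$, because $\theta_k=\theta_{k+1}\circ{}^{t}A^{-1}_{(\ve_{k+1})}$ and the connecting matrices are non-negative. For the inclusion $P_{\alpha,\beta}\subset G_{+}$, if $(\alpha,\beta,1)\mathbf{v}>0$ then $L_i^{(k)}(\mathbf{v})>0$ for all $i$ once $k$ is large, so the integers $p_i^{(k)}v_1+r_i^{(k)}v_2+q_i^{(k)}v_3$ are positive and $\mathbf{v}\in\theta_k((\GZ_{+})^3)$, while $\mathbf{0}=\theta_0(\mathbf{0})$. For the reverse inclusion, if $\mathbf{v}\in\theta_k((\GZ_{+})^3)$ then nestedness gives $L_i^{(k')}(\mathbf{v})\ge 0$ for all $k'\ge k$, and letting $k'\to\infty$ yields $(\alpha,\beta,1)\mathbf{v}\ge 0$.

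The step I expect to be the main obstacle is promoting $(\alpha,\beta,1)\mathbf{v}\ge 0$ to the strict inequality demanded by $P_{\alpha,\beta}$ for $\mathbf{v}\ne\mathbf{0}$, i.e. excluding $(\alpha,\beta,1)\mathbf{v}=0$ for nonzero integral $\mathbf{v}$. This is the statement that $1,\alpha,\beta$ are linearly independent over $\GQ$, and it is exactly where the hypothesis that $(\alpha,\beta)$ does not stop is used: an integral relation $a\alpha+b\beta+c=0$ is carried by the $GL_3(\GZ)$ cocycle $^{t}\Psi_{\Delta_k}$ to an integral relation satisfied by the iterate $T^{k}(\alpha,\beta)$, and a reduction argument forces one of the boundary equations of Proposition 3.3 to hold for some iterate, i.e. forces the algorithm to stop. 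I would isolate this as a separate lemma --- essentially the contrapositive of the stopping criterion of Propositions 3.3--3.4 (cf. \cite{II}, \cite{IN}) --- and with it the reverse inclusion is completed, giving $G_{+}=P_{\alpha,\beta}$ and hence that $(G,G_{+})$ is the dimension group determined by $\{^{t}A^{-1}_{(\ve_{k+1},n_{k+1},m_{k+1})}\}_{k\ge 0}$ with canonical homomorphisms $\{\theta_k\}_{k\ge 0}$.
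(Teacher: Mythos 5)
Your setup and your proof of the inclusion $P_{\alpha,\beta}\subset\bigcup_k\theta_k((\GZ_+)^3)$ match the paper: both divide each component of ${}^t\Psi_{\Delta_k}\mathbf{v}$ by $q_i^{(k)}>0$ and invoke Lemma 3.7 to make all three components eventually positive. The divergence, and the problem, is in the reverse inclusion. You take limits of the inequalities $L_i^{(k')}(\mathbf{v})\ge 0$, which only yields $(\alpha,\beta,1)\mathbf{v}\ge 0$, and you then must rule out equality for $\mathbf{v}\ne\mathbf{0}$, i.e.\ prove that $1,\alpha,\beta$ are linearly independent over $\GQ$. You correctly flag this as the crux, but the lemma you propose is not proved, and the reduction you sketch for it does not work: if $a x+b y=c$ holds, a direct computation with either branch of $T$ shows the iterate satisfies $(b-c)x'+(a-c)y'=c'$ (resp.\ $(a-c)x'+(b-c)y'=c'$), so the difference of the two leading coefficients is invariant up to sign. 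A relation with $|a-b|\ge 2$ therefore never reduces to any of the three forms $(p+1)x+py=q$, $px+(p+1)y=q$, $px+py=q$ of Proposition 3.3, and Propositions 3.3--3.4 (necessity in one direction, sufficiency only for the symmetric form) cannot force the algorithm to stop. Establishing independence would require quantitative information on the approximation rate $|q_i^{(k)}\alpha-p_i^{(k)}|$, which Lemma 3.7 does not provide.

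The paper avoids this issue entirely. For $\mathbf{v}=\theta_k(a\mathbf{e}_1+b\mathbf{e}_2+c\mathbf{e}_3)$ it suffices to show $(\alpha,\beta,1)\theta_k(\mathbf{e}_i)>0$ for $i=1,2,3$, and this is done exactly, not by a limit: writing $(\alpha_k,\beta_k)=T^k(\alpha,\beta)$ and using the cofactor form of $\Psi_{\Delta_k}^{-1}$, one gets
\[
\alpha_k=\frac{(\alpha,\beta,1)\theta_k(\mathbf{e}_1)}{(\alpha,\beta,1)\theta_k(\mathbf{e}_3)},\qquad
\beta_k=\frac{(\alpha,\beta,1)\theta_k(\mathbf{e}_2)}{(\alpha,\beta,1)\theta_k(\mathbf{e}_3)},\qquad
(\alpha,\beta,1)\theta_k(\mathbf{e}_3)=\frac{1}{q_2^{(k)}(\alpha_k+\beta_k)+q_3^{(k)}}>0,
\]
the last identity coming from Lemma 3.6; the hypothesis that the algorithm does not stop enters only through $\alpha_k>0$, $\beta_k>0$. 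This gives strict positivity of the generators' pairings directly, so no statement about rational independence of $1,\alpha,\beta$ is needed. You should replace your limiting argument for $(\supset)$ by this exact computation, or else supply a genuine proof of the independence lemma, which the paper's cited results do not contain.
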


\begin{proof}
\indent
We see that \( \theta_{k} = \theta_{k+1} \circ~^{t}A^{-1}_{ (\ve_{k+1}, n_{k+1}, m_{k+1}) } \) and \( \theta_{k} \) is isomorphic.
Then \( G \) is the inductive limit group of the induction \( \{ ^{t}A^{-1}_{ (\ve_{k+1}, n_{k+1}, m_{k+1}) } \}_{k \ge 0} \).
Therefore it is enough for us to show the following equation for the positive cone \( P_{\alpha, \beta} \).
\[ 
P_{\alpha, \beta} = \bigcup_{k \ge 0} \theta_k \left( ( \GZ_{+} )^3 \right)
\]
\noindent
\( (\subset) \) \ Assume \( {\bf v} = ~ ^{t}(v_1, v_2, v_3) \in P_{\alpha, \beta} \backslash \{ {\bf 0} \} \), then we have
\begin{eqnarray}
\theta_k^{-1} ( {\bf v} ) & =& ^{t} \Psi_{\Delta_k} ( {\bf v}) = 
\begin{pmatrix}
p_1^{(k)} & r_1^{(k)} & q_1^{(k)} \\[1.5mm]
p_2^{(k)} & r_2^{(k)} & q_2^{(k)} \\[1.5mm]
p_3^{(k)} & r_3^{(k)} & q_3^{(k)} 
\end{pmatrix}
\begin{pmatrix}
v_1 \\ v_2 \\ v_3 
\end{pmatrix} \nonumber \\[1.5mm]
&=& 
\begin{pmatrix}
p_1^{(k)}v_1+ r_1^{(k)} v_2 + q_1^{(k)} v_3 \\[1.5mm]
p_2^{(k)}v_1+ r_2^{(k)} v_2 + q_2^{(k)} v_3 \\[1.5mm]
p_3^{(k)}v_1+ r_3^{(k)} v_3 + q_3^{(k)} v_3
\end{pmatrix} \nonumber
\end{eqnarray}

\begin{eqnarray*}
=
\begin{pmatrix}
q_1^{(k)} \left( \frac{p_1^{(k)}}{q_1^{(k)}} v_1+ \frac{r_1^{(k)}}{q_1^{(k)}} v_2 +  v_3 \right) \\[3mm]
q_2^{(k)} \left( \frac{p_2^{(k)}}{q_2^{(k)}} v_1+ \frac{r_2^{(k)}}{q_2^{(k)}} v_2 +  v_3 \right) \\[3mm]
q_3^{(k)} \left( \frac{p_3^{(k)}}{q_3^{(k)}} v_1+ \frac{r_3^{(k)}}{q_3^{(k)}} v_2 +  v_3 \right)
\end{pmatrix}.
\end{eqnarray*}

Then, for \( i=1,2,3 \), we obtain
\begin{eqnarray*}
\lim_{k \rightarrow \infty} \frac{p_i^{(k)}}{q_i^{(k)}} v_1 + \frac{r_i^{(k)}}{q_i^{(k)}} v_2 + v_3 
 =   \alpha v_1 + \beta v_2 + v_3 =   ( \alpha, \beta, 1) {\bf v} > 0.
\end{eqnarray*}

For enough large \(k \ge 1\), we see that
\[
\theta^{-1}_{k} ( \bf{v}) \in (\GZ)^3.
\]

Therefore, we have
\[ {\bf v} \in \bigcup_{k \ge 0} \theta_k \left( ( \GZ_{+} )^3 \right).
\]
\noindent
\( (\supset)  \) \ Assume \( {\bf v}= ~ ^{t}(v_1, v_2, v_3) \in \bigcup_{k \ge 0} \theta_k \left( ( \GZ_{+} )^3 \right) \). 
Then, there exists \( a, b, c \in \GZ_{+} \cup \{ {\bf 0} \} \) such that
\[
{\bf v} = \theta_k ( a \textbf{e}_1 + b \textbf{e}_2 + c \textbf{e}_3 ) 
= a \theta_k ( \textbf{e}_1) + b \theta ( \textbf{e}_2 ) + c \theta_k (\textbf{e}_3)
\]
where \( \textbf{e}_1 =~^t(1,0,0), \textbf{e}_2 =~^t(0,1,0), \textbf{e}_3 =~^t(0,0,1)\). 
Therefore, we enough to show the following:
\[
\theta_k ( \textbf{e}_i ) \in P_{\alpha, \beta} \quad (i =1,2,3).
\]
Let \( (\alpha_k, \beta_k ) = T^k (\alpha, \beta)\) for \( k \ge 1 \). We have
\begin{eqnarray*}
C \begin{pmatrix} \alpha_k \\ \beta_k \\ 1 \end{pmatrix} = \Psi_{\Delta_k}^{-1} \begin{pmatrix} \alpha \\ \beta \\ 1 \end{pmatrix}
\end{eqnarray*}
for some \(C \neq 0 \). By taking the cofactor matrix of \( \Psi_{\Delta_k} \), the inverse of \( \Psi_{\Delta_k} \) is equal to
\begin{eqnarray*}
\begin{pmatrix}
\left| \begin{array}{cc} r_2^{(k)} & r_3^{(k)} \\ q_2^{(k)} & q_3^{(k)} \end{array} \right| & 
-\left| \begin{array}{cc} p_2^{(k)} & p_3^{(k)} \\ q_2^{(k)} & q_3^{(k)}\end{array} \right| & 
\left| \begin{array}{cc} p_2^{(k)} & p_3^{(k)} \\ r_2^{(k)} & r_3^{(k)} \end{array} \right| \\[0.5cm] 
-\left| \begin{array}{cc} r_1^{(k)} & r_3^{(k)} \\ q_1^{(k)} & q_3^{(k)} \end{array} \right| & 
\left| \begin{array}{cc} p_1^{(k)} & p_3^{(k)} \\ q_1^{(k)} & q_3^{(k)} \end{array} \right| & 
-\left| \begin{array}{cc} p_1^{(k)} & p_3^{(k)} \\ r_1^{(k)} & r_3^{(k)} \end{array} \right| \\[0.5cm] 
\left| \begin{array}{cc} r_1^{(k)} & r_2^{(k)} \\ q_1^{(k)} & q_2^{(k)} \end{array} \right| & 
-\left| \begin{array}{cc} p_1^{(k)} & p_2^{(k)} \\ q_1^{(k)} & q_2^{(k)} \end{array} \right| & 
\left| \begin{array}{cc} p_1^{(k)} & p_2^{(k)} \\ r_1^{(k)} & r_2^{(k)} \end{array} \right| 
\end{pmatrix}.
\end{eqnarray*}
Then we have
\begin{eqnarray}
\alpha_k = \frac{(r_2^{(k)} q_3^{(k)}-r_3^{(k)} q_2^{(k)})\alpha  +(- p_2^{(k)} q_3^{(k)} + p_3^{(k)} q_2^{(k)})\beta + (p_2^{(k)} r_3^{(k)}-p_3^{(k)} r_2^{(k)})}{(r_1^{(k)} q_2^{(k)}-r_2^{(k)} q_1^{(k)})\alpha + ( - p_1^{(k)} q_2^{(k)} + p_2^{(k)} q_1^{(k)})\beta + ( p_1^{(k)} r_2^{(k)}-p_2^{(k)} r_1^{(k)})},\nonumber \\[0.5cm]
\beta_k = \frac{(- r_1^{(k)} q_3^{(k)}+r_3^{(k)} q_1^{(k)})\alpha +( p_1^{(k)} q_3^{(k)}-p_3^{(k)} q_1^{(k)})\beta + (- p_1^{(k)}r_3^{(k)}+p_3^{(k)} r_1^{(k)})}{(r_1^{(k)} q_2^{(k)}-r_2^{(k)} q_1^{(k)})\alpha + (- p_1^{(k)} q_2^{(k)}+p_2^{(k)} q_1^{(k)})\beta + ( p_1^{(k)} r_2^{(k)}-p_2^{(k)} r_1^{(k)})}. 
\end{eqnarray}

\noindent From (4), we see that
\[
\alpha_k = \frac{(\alpha, \beta, 1) \theta_k (\textbf{e}_1) }{(\alpha, \beta, 1) \theta_k (\textbf{e}_3)},
\ \ \beta_k = \frac{(\alpha, \beta, 1) \theta_k (\textbf{e}_2)}{(\alpha, \beta, 1) \theta_k (\textbf{e}_3)}.
\]

\noindent Because \( \alpha_k >0, \beta_k>0 \) for \( k \ge 1\), it is enough to show that 
\[ (\alpha, \beta, 1) \theta_k (\textbf{e}_3) >0.\]
From lemma 3.5, we obtain that
\begin{eqnarray*}
(\alpha, \beta, 1) \theta_k (\textbf{e}_3) = - \delta_k q^{(k)}_{2} \left( \alpha +  \beta - \frac{p^{(k)}_2 +r^{(k)}_2 }{q^{(k)}_2  } \right)
\end{eqnarray*}
Fromm (4), we see that
\begin{eqnarray*}
\alpha_k + \beta_k &=& \frac{(\ve_1 \cdots \ve_k) q_3^{(k)} \alpha + (\ve_1 \cdots \ve_k) q_3^{(k)} \beta - (\ve_1 \cdots \ve_k)(p_3^{(k)}+r_3^{(k)})}{-(\ve_1 \cdots \ve_k) q_2^{(k)} \alpha - (\ve_1 \cdots \ve_k) q_2^{(k)} \beta + (\ve_1 \cdots \ve_k) (p_2^{(k)} + r_2^{(k)} )}\\[2mm]
&=& - \frac{q_3^{(k)}}{q_2^{(k)}} \frac{\left( \alpha + \beta \right)- \left( \frac{p_3^{(k)}+r_3^{(k)}}{q_3^{(k)}} \right)}{\left( \alpha + \beta  \right)-\left(\frac{p_2^{(k)}+r_2^{(k)}}{q_2^{(k)}} \right)}.
\end{eqnarray*}
From (3), the right hand side of the above equation is equal to 
\[ - \frac{q_3^{(k)}}{q_2^{(k)}} \frac{\left( \alpha + \beta \right)- \left( \frac{p_2^{(k)}+r_2^{(k)}}{q_2^{(k)}} \right) + \frac{\delta_k}{q_2^{(k)}q_3^{(k)}}}{\left( \alpha + \beta  \right)-\left(\frac{p_2^{(k)}+r_2^{(k)}}{q_2^{(k)}} \right)}
=  - \frac{q_3^{(k)}}{q_2^{(k)}} \left( 1+\frac{\delta_k}{q_2^{(k)}q_3^{(k)}} \frac{1}{\left( \alpha + \beta  \right)-\left(\frac{p_2^{(k)}+r_2^{(k)}}{q_2^{(k)}} \right)} \right).
\]
Then we have
\[
 \alpha +  \beta - \frac{p^{(k)}_2 +r^{(k)}_2 }{q^{(k)}_2 } = - \frac{\delta_k}{q_2^{(k)}} \left( \frac{1}{ q_2^{(k)}(\alpha_k + \beta_k) + q^{(k)}_3} \right).
\]
Then we see that
\begin{eqnarray*}
(\alpha, \beta, 1) \theta_k (\textbf{e}_3) = \frac{1}{q_2^{(k)} (\alpha_k + \beta_k) + q_3^{(k)} } >0.
\end{eqnarray*}
Therefore we obtain
\[
( \alpha, \beta, 1) \theta_k ( \textbf{e}_i ) >0 \quad (i=1,2,3).
\]
We finish to prove our main theorem.
\end{proof}

\noindent

\noindent
\begin{flushright}
\begin{tabular}{l}
Koshiro Ishimura,\\
Department of , Toyo University \\
2100, Kujirai, Kawagoe city, Saitama, JAPAN\\
e-mail: ishimura@toyo.jp 
\end{tabular}
\end{flushright}

\end{document}